\title{Statistical Properties of Martin-L\"of Random Sequences}
\author{Matthew Pancia}
\address{Department of Mathematics, University of Texas at Austin,
Austin TX, 78712}
\email{mpancia@math.utexas.edu}
\urladdr{www.ma.utexas.edu/users/mpancia}
\thanks{This paper is the result of work done during the MASS Program at Penn State University. The author would like to thank Stephen G. Simpson for his guidance and support with both the research and the writing of this paper.}
\keywords{Martin-L\"of Randomness, random numbers}
\subjclass[2000]{Primary: 03D99; Secondary: 03D32}
\newtheorem{theorem}{Theorem}[section]
\newtheorem{lemma}{Lemma}[section]
\newtheorem*{corollary}{Corollary}     
\theoremstyle{remark}
\newtheorem*{remark}{Remark}
\newtheorem*{note}{Note}
\theoremstyle{definition}
\let\c@lem=\c@thm
\newcommand{\nn}{\mathbb{N}}
\newcommand{\rr}{\mathbb{R}}
\newcommand{\pz}[1]{\Pi^{0}_{#1}}
\newcommand{\sz}[1]{\Sigma^{0}_{#1}}
\begin{document}

\begin{abstract}    
We study the statistical properties of random numbers under the Martin-L\"of definition of randomness, proving that random numbers obey analogues of Strong Law of Large Numbers, the Law of the Iterated Logarithm, and that they are normal. We also show that weakly (1-)random numbers do not share these properties.
\end{abstract}

\maketitle


\section{Introduction} 

\label{sec:introduction}

The study of random phenomena is an important and interesting field of mathematical research. Despite the pervasiveness of such phenomena throughout mathematics, however, there remains to be a definitive and wholly convincing definition of precisely what randomness \emph{is}. This is not for lack of trying - there are several competing definitions of randomness, but the question remains: How do we decide which one is the ``right'' definition?
One way to gauge the efficacy of these definitions is to look at the properties of random objects that arise when different definitions are used. We concern ourselves in particular with the statistical behavior of random sequences of $0$'s and $1$'s under different choices of what it means to be random. We would, of course, expect a proper definition of randomness to yield the sort of statistical properties in the digits of random sequences that coincide with what we normally consider random behavior. That is, the values of a random sequence should obey the Strong Law of Large Numbers, the Law of the Iterated Logarithm, and all of the other properties that are enjoyed by independent, identically distributed (i.i.d.) random variables acting on most elements of the space of infinite sequences of (fair) coin tosses. The aim of this paper is to explore and prove these properties for Martin-L\"of random sequences and to do some basic comparison with other definitions of randomness. Similar results can be found in previously published papers such as \cite{davie} and \cite{vovk1988law}, but we hope that the reader will find our exposition to be more unified and accessible.

\subsection{Acknowledgements}
This paper is the result of work done during the MASS Program at Penn State University. The author would like to thank Stephen G. Simpson for his guidance and support with both the research and the writing of this paper.
\section{Preliminaries and Notation}
\label{sec:preliminaries_and_notation}
We now set notation and provide backround for the rest of the paper. Readers familiar with this material may skip to \cref{sec:statistical_properties}, where we prove our main results.
\subsection{Computability Theory Basics}
\label{sub:computability_theory_basics}

We let $2^\nn$ denote the \emph{Cantor space}, i.e. the set of infinite sequences of 0's and 1's or total functions $X : \nn \rightarrow \{ 0,1 \}$. We will use $X(i)$ to refer to the ($i+1$)-st value of the sequence $X$ (starting the indexing at 0) or, treating $X$ as a binary expansion of a number in $[0,1]$, its $(i+1)$-st digit. $S_n$ will generally refer to the sum of the first $n$ digits of $X \in 2^\nn$. For a subset $V$ of $2^\nn$, we will denote its complement $2^\nn \setminus V$ as $\overline{V}$.

$2^{< \nn}$ will be the set of \emph{bitstrings}, or finite sequences of 0's and 1's. We will denote the \emph{empty string} by $<>$. If $\sigma, \tau$ are bitstrings, we denote their concatenation by $\sigma ^\smallfrown \tau$, and the length of $\sigma$ by $|\sigma|$.

We denote the bitstring generated by restricting $X \in 2^\nn$ to an initial segment of length $n$ by $X \upharpoonright n $. Let $N_\sigma$ be the \emph{neighborhood} generated by a bitstring $\sigma$, defined as $\{ X \in 2^\nn : X \upharpoonright |\sigma| = \sigma \}$. That is, $N_\sigma$ consists of all $X$ that have $\sigma$ as an initial segment. We let $\mu$ denote the unique Borel measure on $2^\nn$ such that $\mu(N_\sigma) = \frac{1}{2^{|\sigma|}}.$ Note that $2^\nn$ with this probability measure is the same as the space of infinite fair coin tosses, letting 1 denote heads and 0 denote tails.

We define $\varphi_{e}(x)$ to be the output of the program with G\"odel number $e$ running with input $x$ (if it halts).

Letting $X$ be an element of $2^\nn$, we define $\varphi_{e}^{X}(x)$ as above, except $X$ is used as an oracle in the computation.

The following is a standard result, proven in \cite{computability}.
\begin{theorem}[Parameterization Theorem]\label{thm:param}
Given a 2-place partial recursive function $\psi(w,x)$, we can find a 1-place recursive function $h(w)$ such that
$$\varphi_{h(w)}(x) \simeq \psi(w,x) $$
for all $w,x$.
\end{theorem}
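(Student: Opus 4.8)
The plan is to follow the classical ``hard-wiring'' construction. Since $\psi$ is partial recursive, fix a G\"odel number $e$ so that, under a standard primitive recursive pairing $\langle \cdot , \cdot \rangle : \nn \times \nn \to \nn$, we have $\psi(w,x) \simeq \varphi_e(\langle w,x\rangle)$ for all $w,x$ (or, if one works with a model allowing several inputs directly, $\psi(w,x) \simeq \varphi_e(w,x)$). For each fixed $w$ I would then exhibit a program $P_w$ which, on input $x$, first computes the code $\langle w,x\rangle$ from the stored constant $w$ and then simulates program $e$ on that code. By construction $\varphi_{P_w}(x) \simeq \psi(w,x)$ for every $x$, so once $P_w$ is in hand the only remaining task is to name it effectively.

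The crux is therefore to check that the assignment $w \mapsto (\text{G\"odel number of } P_w)$ is itself recursive; we may then define $h(w)$ to be this code and we are done. Here I would invoke the effectiveness of elementary program transformations: given a numeral for $w$ together with the fixed numeral $e$, one can algorithmically write down the source code of the machine that stores $w$ as a constant, appends its actual input $x$, applies the (primitive recursive) pairing function, and then runs a universal simulator on $e$ at the resulting argument. Each ingredient here --- storing a constant, computing a fixed primitive recursive function, universal simulation --- has an explicit code, and assembling such code fragments is a primitive recursive operation on G\"odel numbers in any reasonable numbering; hence the resulting $h$, being a composition of $w$ with these fixed fragments, is (primitive) recursive.

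The step I expect to be the main obstacle is precisely this last piece of bookkeeping: turning ``one can algorithmically write down the source code'' into a genuine argument requires committing to a concrete model of computation and verifying that the relevant syntactic manipulation --- substituting a numeral for a variable throughout a program --- is primitive recursive in the code. For the $\mu$-recursive presentation this is essentially immediate from closure of the partial recursive functions under composition; for a machine model it is a routine but tedious induction on program structure. I would handle it by fixing the $\mu$-recursive presentation and appealing to the closure properties already developed in \cite{computability}, which is why we are content to record the statement here as standard.
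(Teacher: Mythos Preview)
Your sketch is the standard $s$-$m$-$n$ argument and is correct; note, however, that the paper does not supply its own proof of this theorem but simply records it as a standard result and defers to \cite{computability}. What you have written is essentially the classical proof one finds in that reference, so there is nothing to compare beyond observing that you have filled in what the paper deliberately omits.
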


A set $A \subseteq 2^\nn$ is called $\sz{1}$ if, for some recursive predicate $R$
$$ X \in A \equiv \exists \; n \in \nn \, :  R(X,n) \mbox{ holds}.$$

A set $A$ is $\pz{1}$ if, for some recursive predicate $R$
$$ X \in A \equiv \forall \; n \in \nn \, :  R(X,n) \mbox{ holds}.$$

We will use the standard enumeration of $\pz{1}$ and $\sz{1}$ sets in the Cantor space, given by
$$U_{e} = \{ X \in 2^\nn : \varphi_{e}^{X}(0) \uparrow \} \qquad S_{e} = \{X \in 2^\nn : \varphi_{e}^{X}(0) \downarrow \} $$
respectively, where $\uparrow$ means that the expression is undefined and $\downarrow$ means that it is defined.

We say a sequence $V_n$ of $\sz{1}$ sets is \emph{effectively open} or \emph{uniformly $\sz{1}$} if $V_n = S_{f(n)}$, where $f$ is a total recursive function. A set $S$ is \emph{effectively null} if $S \subseteq \cap_{n=0}^{\infty} V_n$, where $V_n$ are uniformly $\sz{1}$ and $\mu(V_n) \leq \frac{1}{2^n}$.


\subsection{Definitions of Randomness} 
\label{ssub:definitions_of_randomness}
There are 3 definitions of randomness that will be considered in this paper.

A point $X \in 2^\nn$ is called \emph{weakly random} if it does not belong to any $\pz{1}$ set of measure 0.

A point $X \in 2^\nn$ is said to be \emph{random} (in the sense of Martin-L\"of) if it does not lie in any effectively null set. Equivalently, the singleton set $\{ X\}$ is not effectively null.

A point $X \in 2^\nn$ is called \emph{strongly random} if it does not belong to any $\pz{2}$ set of measure 0.

The names assigned to these different types of randomness are justified, as the following shows. The result follows from the fact that all null $\pz{1}$ sets are effectively null and all effectively null sets are contained in $\pz{2}$ null sets.

\begin{theorem}\label{thm:randomheir} Let $X \in 2^\nn.$
\begin{enumerate}
\item $X$ is random $\Rightarrow X$ is weakly random.
\item $X$ is strongly random $\Rightarrow X$ is random.
\end{enumerate}
\end{theorem}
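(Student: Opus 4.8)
The plan is to prove both implications by contraposition, following the inclusions indicated above: a null $\pz{1}$ class is effectively null, and an effectively null set sits inside a $\pz{2}$ class of measure zero.

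For (1), I would start from an $X$ that is not weakly random, so $X$ lies in some $\pz{1}$ class $P$ with $\mu(P) = 0$. The complement $\overline{P}$ is effectively open, so there is a recursively enumerable list of strings $\tau_0, \tau_1, \dots$ with $\overline{P} = \bigcup_i N_{\tau_i}$; set $C_s = \bigcup_{i < s} N_{\tau_i}$, a clopen set whose measure is a rational that can be computed uniformly from $s$ (it is a finite union of basic neighborhoods). Then $P = \bigcap_s \overline{C_s}$, and by continuity of $\mu$ from above the rationals $\mu(\overline{C_s}) = 1 - \mu(C_s)$ decrease monotonically to $\mu(P) = 0$. Consequently the function $s(n) = $ the least $s$ with $\mu(\overline{C_s}) \le 2^{-n}$ is total recursive. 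Putting $V_n = \overline{C_{s(n)}}$, each $V_n$ is clopen, hence $\sz{1}$, and uniformly so in $n$: one writes down, uniformly in $n$, a program that on oracle $X$ computes $s(n)$, reads the first $\max_{i<s(n)}|\tau_i|$ bits of $X$, and halts iff $X$ extends none of $\tau_0,\dots,\tau_{s(n)-1}$, and then appeals to the Parameterization Theorem (\cref{thm:param}). We have $\mu(V_n) \le 2^{-n}$ and $P \subseteq V_n$ for every $n$, so $P \subseteq \bigcap_n V_n$; thus $P$, and in particular $\{X\}$, is effectively null, so $X$ is not random.

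For (2), suppose $X$ is not random, so $X$ belongs to an effectively null set, hence to $\bigcap_n V_n$ for some uniformly $\sz{1}$ sequence $V_n$ with $\mu(V_n) \le 2^{-n}$. Set $Q = \bigcap_n V_n$. Writing the uniform $\sz{1}$ condition ``$X \in V_n$'' as $\exists m\, R(n,m,X)$ for a single recursive predicate $R$, membership in $Q$ becomes $\forall n\, \exists m\, R(n,m,X)$, so $Q$ is a $\pz{2}$ class; and $\mu(Q) \le \inf_n \mu(V_n) = 0$. Hence $X$ lies in a $\pz{2}$ class of measure zero, i.e. $X$ is not strongly random.

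I expect the only real obstacle to be the effectivization in part (1): one must check that the clopen stages $C_s$ approximating $\overline{P}$ have uniformly computable measure, so that the convergence $\mu(\overline{C_s}) \downarrow 0$ is witnessed by a recursive function $s(n)$. This is precisely where the hypothesis $\mu(P)=0$ enters, and it is what upgrades ``null $\pz{1}$'' to ``effectively null''. The remainder of (1), and all of (2), is bookkeeping: passing between the ``$\exists n\, R(X,n)$'' presentation of a $\sz{1}$ class and its presentation as a c.e. union of basic neighborhoods (which legitimizes the manipulations with $C_s$ and can be quoted from the computability background), and counting quantifiers in a countable intersection.
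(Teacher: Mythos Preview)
Your proof is correct and follows precisely the approach the paper indicates: the paper only offers the one-line justification that every null $\pz{1}$ class is effectively null and every effectively null set is contained in a null $\pz{2}$ class, and your argument is exactly a careful fleshing-out of those two facts by contraposition. The details you supply (approximating the $\sz{1}$ complement by clopen stages with computable measure to extract a recursive $s(n)$, and counting quantifiers in $\bigcap_n V_n$) are the standard ones and raise no issues.
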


This tells us that (Martin-L\"of) randomness is intermediate between weak randomness and strong randomness. As will be shown later, weak randomness is genuinely weaker than randomness, as there are certain statistical properties that random elements have that weakly random elements do not.

Similarly, strong randomness is stronger than randomness, as there are relations involving Turing reducibility that hold for strongly random elements that do not hold for random elements.  For example, we have that if $a$ represents the Turing degree of a strongly random sequence $X$ and $0'$ represents the Turing degree of the halting problem, then $\inf(a, 0')$ is recursive. The same result does not hold with the hpyothesis that $X$ be random, however.

One of the important consequences of the Martin-L\"of definition of randomness is that a computable-theoretic analogue of the Borel-Cantelli lemma from probability theory holds, giving us a powerful tool for proving statements about random sequences. A proof is given in \cite{davie} (stronger than what will be stated here), cast in the light of complexity theory, but we present proofs that are more consistent with our measure-theoretic approach to randomness.

\begin{lemma}[Solovay's Lemma]\label{lem:solo}
Let $V_1, V_2, \dots$ be a sequence of uniformly $\sz{1}$ sets, then:
\begin{enumerate}
\item If $X$ is random and $\sum_{n=1}^{\infty} \mu(V_n) < \infty$, then $X$ lies in only finitely many $V_n$.
\item If $X$ is weakly random, $\sum_{n=1}^{\infty}\mu(V_n)$ diverges, and $V_n$ represent mutually independent events, then for each $m$ there exists an $N$ such that for some $m \leq i \leq N$, $X \in V_i$.
\end{enumerate}
\end{lemma}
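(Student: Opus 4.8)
The plan is to prove the two parts of Solovay's Lemma separately, since they rely on quite different ideas: part (1) is a soft effective Borel--Cantelli argument using the definition of an effectively null set, while part (2) uses the independence hypothesis together with a concrete estimate on the measure of a finite union to build a $\pz{1}$ set of measure zero avoiding the conclusion.

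For part (1), suppose $\sum_{n} \mu(V_n) < \infty$. First I would pass to a tail: choose, for each $k$, an index $n_k$ such that $\sum_{n \geq n_k} \mu(V_n) \leq 2^{-k}$, which is possible because the series converges (and the $n_k$ can be found effectively, since the partial sums are computable from below and the bound $2^{-k}$ eventually certifies itself — one subtlety worth a remark is that $\mu(V_n)$ is only lower semicomputable, so I would instead work with rational lower approximations and inflate slightly, or simply note that a recursive choice of $n_k$ exists because $\sum \mu(V_n)$ converges to a finite value). Then set $W_k = \bigcup_{n \geq n_k} V_n$. Each $W_k$ is a $\sz{1}$ set, uniformly in $k$ (a countable union of a uniformly $\sz{1}$ family is $\sz{1}$, uniformly), and $\mu(W_k) \leq \sum_{n \geq n_k} \mu(V_n) \leq 2^{-k}$. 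Hence $\bigcap_k W_k$ is an effectively null set. If $X$ were in infinitely many $V_n$, then $X \in W_k$ for every $k$, so $X \in \bigcap_k W_k$, contradicting the randomness of $X$. Therefore $X$ lies in only finitely many $V_n$.

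For part (2), suppose $X$ is weakly random, $\sum_n \mu(V_n)$ diverges, and the $V_n$ are mutually independent. Fix $m$, and suppose toward a contradiction that there is no $N$ with $X \in V_i$ for some $m \leq i \leq N$; equivalently, $X \notin V_i$ for all $i \geq m$, i.e. $X \in \bigcap_{i \geq m} \overline{V_i}$. The plan is to show this intersection is a $\pz{1}$ set of measure zero, contradicting weak randomness. It is $\pz{1}$ because each $\overline{V_i}$ is $\pz{1}$ and a countable intersection of a uniformly $\pz{1}$ family is $\pz{1}$. For the measure: by independence, for any $N \geq m$,
\[
\mu\Bigl(\bigcap_{i=m}^{N} \overline{V_i}\Bigr) = \prod_{i=m}^{N} \bigl(1 - \mu(V_i)\bigr) \leq \prod_{i=m}^{N} e^{-\mu(V_i)} = \exp\Bigl(-\sum_{i=m}^{N} \mu(V_i)\Bigr),
\]
using $1 - t \leq e^{-t}$. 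Since $\sum_i \mu(V_i)$ diverges, the tail sum $\sum_{i=m}^{N}\mu(V_i) \to \infty$ as $N \to \infty$, so the right-hand side tends to $0$; hence $\mu\bigl(\bigcap_{i \geq m} \overline{V_i}\bigr) = 0$. Thus $\bigcap_{i \geq m}\overline{V_i}$ is a $\pz{1}$ null set containing $X$, contradicting weak randomness, which proves the claim for this $m$; since $m$ was arbitrary, we are done.

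I expect the main obstacle to be a bookkeeping/uniformity point rather than a conceptual one: namely, verifying carefully that the unions $W_k$ in part (1) are genuinely \emph{uniformly} $\sz{1}$ (so that $\bigcap_k W_k$ fits the definition of effectively null), and that the effective choice of the cutoffs $n_k$ is legitimate given only lower semicomputability of $\mu(V_n)$. The latter is handled by replacing exact measures with rational under-approximations obtained by running the $\sz{1}$ approximations long enough, and absorbing the error into a slightly weaker bound such as $2^{-k+1}$, which changes nothing. In part (2), the only thing to be careful about is that ``mutually independent events'' is used correctly to factor the measure of the finite intersection; everything else is the elementary inequality $1-t\le e^{-t}$ and divergence of the tail of the series.
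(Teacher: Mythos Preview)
Your proof of part~(2) is essentially the paper's: both pass to the $\pz{1}$ set $\bigcap_{i\ge m}\overline{V_i}$, use independence to write its measure as $\prod_{i\ge m}(1-\mu(V_i))$, and argue this product vanishes because $\sum_i\mu(V_i)=\infty$. The only cosmetic difference is that you invoke $1-t\le e^{-t}$ explicitly, whereas the paper cites the standard equivalence $\sum a_i=\infty\iff\prod(1-a_i)=0$.

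For part~(1) the paper gives no argument of its own (it defers to a reference), but your sketch has a genuine gap precisely at the point you label ``bookkeeping.'' Lower semicomputability of $\mu(V_n)$ yields lower approximations to the partial sums $\sum_{n<N}\mu(V_n)$, and those say nothing about when the \emph{tail} $\sum_{n\ge N}\mu(V_n)$ falls below $2^{-k}$, because the total $S=\sum_n\mu(V_n)$ is not known. Neither of your proposed fixes works: rational under-approximations to the $\mu(V_n)$ cannot certify an upper bound on a tail, and bare convergence of the series in no way forces the cutoffs $n_k$ to be recursive. Hence your sequence $W_k=\bigcup_{n\ge n_k}V_n$ is not shown to be \emph{uniformly} $\sz{1}$, and you have not exhibited an effectively null set. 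The standard repair bypasses tails altogether: pick any integer $c>\sum_n\mu(V_n)$ (it need only exist, not be effectively found) and set $W_k=\{X:\lvert\{n:X\in V_n\}\rvert\ge c\cdot 2^{k}\}$. For each fixed $c$ this is uniformly $\sz{1}$ in $k$, and integrating the counting function gives $\mu(W_k)\le (c\cdot 2^{k})^{-1}\sum_n\mu(V_n)<2^{-k}$, so $(W_k)_k$ is a genuine Martin-L\"of test whose intersection contains every $X$ lying in infinitely many $V_n$.
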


A proof of $(1)$ is contained in, so we will prove (2).
\begin{proof}[Proof of (2)]
Choose an $m$ and consider the set
\[ Q_m = \bigcup_{n=m}^\infty V_n.\]
We have that $Q_m$, being the union of $\Sigma_1^0$ sets, is $\Sigma_1^0$. By De Morgan's Laws and the fact that $V_n$ are mutually independent, we have that
\[ \overline{Q_m} = \bigcap_{n=m}^\infty \overline{V_n}\]
and
\[ \mu(\overline{Q_m} ) = \prod_{n=m}^\infty (1 - \mu(V_n)).\]
We then will have, noting the above and the fact that the sum of the measures of $V_n$ diverges, $\mu(Q_m) = 1$ for all $m$. This is because (as in Lemma 5.11 of \cite{simpson2007almost}), for a sequence $a_i \in (0,1)$,
\[ \sum_{m=n}^\infty a_m = \infty \quad \text{if and only if} \quad \prod_{m=n}^\infty (1-a_m) = 0. \]
Letting $a_i = \mu(V_n),$ we see that $\mu(\overline{Q_n}) = 0$ and so $\mu(Q_n) = 1$. For $X$ to be weakly random, it must lie in $Q_m$ for all $m$ (the complement of $Q_m$ being a $\Pi_1^0$ set of measure 0), and therefore $X$ is in $V_n$ for infinitely many $n$.
\end{proof}

\subsection{Results From Probability Theory} 
\label{ssub:results_from_probability_theory}
In order to prove some later results, we will need some results from probability theory, taken from \cite{feller}. We state them in the case where the probability space in question is $2^\nn$ with $\mu$ defined as above. $\mathbb{E}$ will refer to the expectation of a random variable with respect to $\mu.$ Finite sequences of coin tosses correspond to initial segments of sequences in $2^\nn$.

Given $X \in 2^\nn$ with $S_n = \sum_{i=1}^{n} X_i$, define the \emph{reduced number of heads in $n$ tosses} by the quantity
$$S_n^* = \frac{S_n - \frac{n}{2}}{\sqrt{\frac{n}{4}}}.$$
\begin{lemma}\label{prlem}
Let $x$ be fixed and let $A$ be the event that for at least one $k$ with $k \leq n$
$$S_k - \frac{k}{2} > x.$$
Then there exists a constant $c$, independent of $x$, such that for all $n$
$$\mu(A) \leq \frac{1}{c} \mu\left(S_n - \frac{n}{2}> x\right).$$
\end{lemma}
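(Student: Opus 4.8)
The plan is to prove this via a first-passage decomposition combined with the independence and symmetry of the increments of $S_n$ — essentially the classical reflection-principle argument (cf.\ L\'evy's maximal inequality). Write $A_k$ for the event that $k$ is the \emph{first} time at which the partial sum exceeds its mean by more than $x$, namely
\[ A_k = \{ X \in 2^\nn : S_j - \tfrac{j}{2} \le x \text{ for all } j < k, \text{ and } S_k - \tfrac{k}{2} > x \}. \]
The sets $A_1, \dots, A_n$ are pairwise disjoint and $A = \bigcup_{k=1}^n A_k$, so $\mu(A) = \sum_{k=1}^n \mu(A_k)$. The crucial structural observation is that $A_k$ depends only on the coordinates $X_1, \dots, X_k$, while the future increment $S_n - S_k = \sum_{i=k+1}^n X_i$ depends only on $X_{k+1}, \dots, X_n$; hence these two events are independent under $\mu$.

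Next I would note that on $A_k$ we have $S_k - \tfrac{k}{2} > x$, so whenever additionally $S_n - S_k \ge \tfrac{n-k}{2}$ we get
\[ S_n - \tfrac{n}{2} = \Big(S_k - \tfrac{k}{2}\Big) + \Big(S_n - S_k - \tfrac{n-k}{2}\Big) > x. \]
Thus $A_k \cap \{S_n - S_k \ge \tfrac{n-k}{2}\} \subseteq \{S_n - \tfrac{n}{2} > x\}$ for each $k$, and since the $A_k$ are disjoint, using independence,
\[ \mu\Big(S_n - \tfrac{n}{2} > x\Big) \ \ge\ \sum_{k=1}^n \mu(A_k)\, \mu\Big(S_n - S_k \ge \tfrac{n-k}{2}\Big). \]
Now $S_n - S_k$ is a sum of $n-k$ independent fair coin tosses, so its distribution is symmetric about its mean $\tfrac{n-k}{2}$, which forces $\mu(S_n - S_k \ge \tfrac{n-k}{2}) \ge \tfrac12$. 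Substituting gives $\mu(S_n - \tfrac{n}{2} > x) \ge \tfrac12\,\mu(A)$, i.e.\ the claim holds with $c = \tfrac12$, a constant depending on neither $x$ nor $n$.

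The argument is short, and the only step needing care is the symmetry estimate $\mu(S_n - S_k \ge \tfrac{n-k}{2}) \ge \tfrac12$: one must account for the atom of the binomial distribution at its mean when $n-k$ is even (the two tails have equal mass and, together with that atom, exhaust the probability space, so the upper tail has mass at least $\tfrac12$), which is precisely why the inequality in the statement is non-strict. I also want to check explicitly that each $A_k$ is determined by $X_1,\dots,X_k$, since that is what legitimizes the independence used above; this is immediate from the definition of $A_k$ but is the heart of the decomposition. The remaining ingredients — disjointness of the $A_k$, the telescoping identity for $S_n - \tfrac{n}{2}$, and the final summation — are routine.
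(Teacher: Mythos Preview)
Your argument is correct: this is the classical first-passage decomposition (L\'evy's maximal inequality / reflection principle), and it delivers the lemma with the explicit constant $c=\tfrac12$. The paper itself does not prove this lemma; it is quoted without proof from Feller as a background result, and the reflection-type argument you give is essentially the one found there, so there is nothing to contrast. One tiny wording quibble: your phrase ``the inequality in the statement is non-strict'' is momentarily confusing, since both inequalities in the lemma are strict; you mean the auxiliary event $\{S_n-S_k\ge\tfrac{n-k}{2}\}$ in your own argument, and that point is handled correctly.
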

\begin{theorem}\label{thm:deviations}
If $n \rightarrow \infty$, $x \rightarrow \infty$ and $x^3 = o(\sqrt{npq})$, then
$$\mu\left(S_n^* > x\right) \sim \frac{e^{-\frac{1}{2}x^2}}{\sqrt{2 \pi} x}. $$
\end{theorem}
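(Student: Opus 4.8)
The plan is to deduce this estimate from the \emph{local} de Moivre--Laplace theorem, together with the classical Mills-ratio asymptotic for the tail of the standard normal distribution, paying attention to the range over which the local approximation remains uniformly accurate.

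First I would set $b(k;n,p) = \binom{n}{k}p^{k}q^{n-k}$ and $y_{k} = (k - np)/\sqrt{npq}$, so that the admissible values of $y_{k}$ are equally spaced with gap $h := 1/\sqrt{npq}$ and
\[ \mu\!\left(S_{n}^{*} > x\right) \;=\; \sum_{k\,:\,y_{k} > x} b(k;n,p). \]
Then, applying Stirling's formula to the three factorials in $\binom{n}{k}$ and expanding the resulting logarithms around $k = np$, I would prove the local estimate
\[ b(k;n,p) \;=\; \frac{h}{\sqrt{2\pi}}\,e^{-y_{k}^{2}/2}\,\bigl(1 + O\!\left(|y_{k}|^{3}h + |y_{k}|\,h\right)\bigr), \]
which says that $b(k;n,p)$ equals $h/\sqrt{2\pi}$ times the standard normal density evaluated at $y_{k}$, with a relative error that is $o(1)$ uniformly as long as $|y_{k}|^{3}h \to 0$. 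This is exactly where the hypothesis $x^{3} = o(\sqrt{npq})$, i.e.\ $x^{3}h \to 0$, enters: essentially all of the mass of the sum is carried by the terms with $x < y_{k} \le 2x$ (since $e^{-y^{2}/2}$ decays that fast), and for those $k$ the relative error is $O(x^{3}h) = o(1)$.

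Next I would handle the two ranges separately. On $x < y_{k} \le 2x$ the local estimate gives $\sum_{x < y_{k}\le 2x} b(k;n,p) = (1+o(1)) \sum_{x < y_{k}\le 2x}\frac{h}{\sqrt{2\pi}}e^{-y_{k}^{2}/2}$, and since $y \mapsto e^{-y^{2}/2}$ is decreasing on $[x,\infty)$, this Riemann sum differs from $\int_{x}^{\infty}\frac{1}{\sqrt{2\pi}}e^{-y^{2}/2}\,dy = 1 - \Phi(x)$ by at most a single boundary term of order $h\,e^{-x^{2}/2}$, which is $o(e^{-x^{2}/2}/x)$ because $x^{3}h \to 0$ forces $xh \to 0$. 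The contribution of $y_{k} > 2x$ is negligible on both sides: an elementary Chernoff-type tail bound gives $\mu(S_{n}^{*} > 2x) \le e^{-2x^{2}}$ and likewise $1 - \Phi(2x) \le e^{-2x^{2}}$, while $x\,e^{-2x^{2}} = o(e^{-x^{2}/2})$. Assembling these pieces yields $\mu(S_{n}^{*} > x) = (1+o(1))(1 - \Phi(x))$. Finally, integrating by parts gives $1 - \Phi(x) = \frac{e^{-x^{2}/2}}{\sqrt{2\pi}\,x} - \frac{1}{\sqrt{2\pi}}\int_{x}^{\infty} y^{-2}e^{-y^{2}/2}\,dy$, and the remainder integral is $O(x^{-2})$ times the leading term, so $1 - \Phi(x) \sim \frac{e^{-x^{2}/2}}{\sqrt{2\pi}\,x}$, which combined with the previous identity is the desired conclusion.

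The hard part will be the bookkeeping in the Stirling expansion: one must check that the cubic and linear error terms in $\log b(k;n,p)$ are genuinely $o(1)$, uniformly over the whole band $x < y_{k} \le 2x$, under the stated growth condition on $x$, and then splice that ``central'' estimate to the crude exponential bound on the far tail so that nothing of order $e^{-x^{2}/2}/x$ is lost in the error. Once the uniform local approximation is in hand, the comparison with the Gaussian integral and the Mills-ratio asymptotic are routine.
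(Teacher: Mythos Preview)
The paper does not actually prove this theorem: it appears in the subsection ``Results From Probability Theory'' as a statement quoted from Feller \cite{feller}, with no argument given. Your outline is correct and is essentially the classical derivation (the one in Feller): the local de Moivre--Laplace approximation via Stirling, uniform on the band where $|y_k|^3 h = o(1)$ (this is exactly the hypothesis $x^3 = o(\sqrt{npq})$), a Riemann-sum comparison with $1-\Phi(x)$ on $x < y_k \le 2x$, a crude exponential bound to dispose of the far tail $y_k > 2x$, and the Mills-ratio asymptotic $1-\Phi(x)\sim e^{-x^2/2}/(\sqrt{2\pi}\,x)$ to finish. So there is nothing to contrast---you have supplied a proof where the paper merely cites one.
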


\begin{lemma}[Hoeffding's Inequality]\label{lem:hoeffding}
Given a sequence of i.i.d. random variables $X_i$ on $2^\nn$ that take on values in $[a,b]$ almost surely, for their sum $S_n = X_1 + \dotsm + X_n$ and $\epsilon > 0 $ we have
$$\mu \left ( \left \{ \omega \in 2^\nn : \frac{ \lvert S_n- \mathbb{E}(S_n)  \rvert}{n} \geq \epsilon\right \} \right) \leq 2\exp\left (\frac{-2 n^2 \epsilon^2
}{(b-a)^2}\right) $$
\end{lemma}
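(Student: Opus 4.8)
The plan is to run the standard Chernoff--Cram\'er argument. First I would fix $t>0$ and apply the exponential form of Markov's inequality to bound
$$\mu\left(S_n - \mathbb{E}(S_n) \geq n\epsilon\right) \leq e^{-tn\epsilon}\,\mathbb{E}\!\left(e^{t(S_n - \mathbb{E}(S_n))}\right),$$
and then use the hypothesis that the $X_i$ are i.i.d.\ (hence independent) to factor the moment generating function as
$$\mathbb{E}\!\left(e^{t(S_n - \mathbb{E}(S_n))}\right) = \prod_{i=1}^{n}\mathbb{E}\!\left(e^{t(X_i - \mathbb{E}(X_i))}\right).$$
So the problem reduces to controlling the moment generating function of a single centered, bounded random variable.

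The crux is the auxiliary estimate (Hoeffding's Lemma): if $Y$ satisfies $\mathbb{E}(Y)=0$ and $Y \in [a-\mathbb{E}(X_1),\, b-\mathbb{E}(X_1)]$ almost surely (an interval of width $b-a$), then $\mathbb{E}(e^{tY}) \leq \exp\!\left(t^2(b-a)^2/8\right)$. I would prove this by the convexity trick: for $y$ in an interval $[\alpha,\beta]$ the function $y \mapsto e^{ty}$ lies below the secant line, so $e^{ty} \leq \frac{\beta - y}{\beta-\alpha}e^{t\alpha} + \frac{y-\alpha}{\beta-\alpha}e^{t\beta}$; taking expectations and using $\mathbb{E}(Y)=0$ gives $\mathbb{E}(e^{tY}) \leq e^{\psi(t)}$ where $\psi$ is an explicit function with $\psi(0)=\psi'(0)=0$ and $\psi''(t) \leq (\beta-\alpha)^2/4 = (b-a)^2/4$ for all $t$ (the bound on $\psi''$ is an AM--GM / variance-of-a-Bernoulli computation). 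A second-order Taylor expansion then yields $\psi(t) \leq t^2(b-a)^2/8$.

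Combining these, $\mu(S_n - \mathbb{E}(S_n) \geq n\epsilon) \leq \exp\!\left(-tn\epsilon + nt^2(b-a)^2/8\right)$, and I would optimize the right-hand side over $t>0$; the minimizing choice $t = 4\epsilon/(b-a)^2$ produces the exponent $-2n\epsilon^2/(b-a)^2$ (equivalently the form $-2n^2\epsilon^2/\big(n(b-a)^2\big)$ appearing in the statement). Applying the same bound to the sequence $-X_i$ controls the lower-tail event $S_n - \mathbb{E}(S_n) \leq -n\epsilon$, and a union bound over the two one-sided events introduces the factor of $2$, giving the claimed two-sided inequality.

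I expect the main obstacle to be Hoeffding's Lemma, specifically the verification that $\psi''(t) \leq (b-a)^2/4$ uniformly in $t$: it is the one step that is not purely formal, requiring the observation that $\psi''(t)$ is exactly the variance of a Bernoulli-type random variable on the endpoints of the interval, which is maximized at $1/4$ times the squared interval width. Everything else---the exponential Markov step, the factorization, the optimization over $t$, and the symmetrization---is routine once that estimate is in hand.
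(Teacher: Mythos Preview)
Your argument is the standard, correct proof of Hoeffding's inequality via the Chernoff--Cram\'er method and Hoeffding's Lemma. There is, however, nothing to compare it to: the paper does not prove this lemma at all. It is listed in the ``Results from Probability Theory'' subsection as a background fact taken from Feller, and is simply quoted without proof. So your write-up supplies an argument where the paper gives none; the approach you outline is exactly the textbook one.

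One minor point worth flagging: the exponent your optimization produces, $-2n\epsilon^2/(b-a)^2$, is the correct bound. Your parenthetical rewriting of it as $-2n^2\epsilon^2/\big(n(b-a)^2\big)$ does not in fact match the displayed statement, which has $(b-a)^2$ rather than $n(b-a)^2$ in the denominator. The discrepancy is a typo in the statement, not an error in your derivation---note that the fair-coin special case stated immediately afterward has the correct exponent $-2n\epsilon^2$, consistent with what you obtained.
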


We also state a version of Hoeffding's Inequality to be used specifically for the proof of \cref{thm:SLLN}.

\begin{lemma}[Hoeffding's Inequality - Fair Coin Case]\label{lem:faircoinhoeffding} For $X \in 2^\nn,$, all $\epsilon > 0$ and $S_n = \sum_{i=0}^{n-1} X(i)$, we have that
\[\mu \left ( \left \lvert \frac{S_n}{n} - \frac{1}{2} \right \rvert > \epsilon \right ) < 2 \exp(-2n\epsilon^2).\]
\end{lemma}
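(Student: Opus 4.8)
The plan is to obtain this as a direct specialization of the general Hoeffding Inequality, \cref{lem:hoeffding}, applied to the coordinate functions on the probability space $(2^\nn,\mu)$. For $i=0,1,\dots,n-1$, let $X_i$ be the random variable $X\mapsto X(i)$. Because $\mu$ is the product (fair-coin) measure, the family $\{X_i\}$ is i.i.d., and each $X_i$ takes values in $\{0,1\}\subseteq[0,1]$ with $\ex(X_i)=\mu(\{X\in 2^\nn : X(i)=1\})=\tfrac12$. Hence $S_n=X_0+\dots+X_{n-1}$ satisfies $\ex(S_n)=\tfrac n2$, and therefore
\[ \left\lvert \frac{S_n}{n}-\frac12 \right\rvert = \frac{\lvert S_n-\ex(S_n)\rvert}{n}. \]

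Next I would apply \cref{lem:hoeffding} with $[a,b]=[0,1]$, so that $(b-a)^2=1$, which bounds $\mu\!\left(\left\lvert \tfrac{S_n}{n}-\tfrac12\right\rvert\ge\epsilon\right)$ by $2\exp(-2n\epsilon^2)$; since the event $\{\,\lvert\cdot\rvert>\epsilon\,\}$ appearing in the statement is contained in $\{\,\lvert\cdot\rvert\ge\epsilon\,\}$, the inequality transfers. The one small point requiring attention is the passage from $\le$ to the strict $<$ in the statement. This is not serious: the event is clopen (it depends only on $X\upharpoonright n$), so it has dyadic-rational measure, and strictness follows either because \cref{lem:hoeffding} in fact yields the exponent $2n^2\epsilon^2\ge 2n\epsilon^2$, which is strictly larger as soon as $n\ge 2$, leaving only the case $n=1$ to be checked by inspection. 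I do not anticipate any real obstacle; the substance of the argument is simply recognising the i.i.d.\ structure of the coordinates and tracking the constants $a=0$, $b=1$, $\ex(S_n)=n/2$.

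For completeness, this special case also admits a short self-contained proof by the Chernoff method, which makes the constant transparent: for $t>0$, independence gives $\mu(S_n-\tfrac n2>n\epsilon)\le e^{-tn(1/2+\epsilon)}\ex(e^{tS_n})=e^{-tn(1/2+\epsilon)}\bigl(\tfrac{1+e^t}{2}\bigr)^n$; the elementary bound $\tfrac{1+e^t}{2}\le e^{t/2+t^2/8}$ reduces this to $e^{-tn\epsilon+t^2 n/8}$, which is minimised at $t=4\epsilon$ and equals $e^{-2n\epsilon^2}$ there; finally the symmetric lower tail and a union bound supply the factor $2$. One could present the specialization in the main text and record this computation as a remark if desired.
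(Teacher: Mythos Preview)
The paper does not prove \cref{lem:faircoinhoeffding}; it is stated in \cref{ssub:results_from_probability_theory} as a standard result imported from probability theory (cf.\ \cite{feller}), so there is no argument in the text to compare against. Your derivation---specializing \cref{lem:hoeffding} to the coordinate maps $X_i(\omega)=\omega(i)$ with $[a,b]=[0,1]$ and $\ex(S_n)=n/2$---is the natural way to obtain it, and your self-contained Chernoff computation is correct and gives exactly the constant $2e^{-2n\epsilon^2}$.

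One caution about your strictness argument: you invoke the exponent $2n^2\epsilon^2$ from \cref{lem:hoeffding} to beat $2n\epsilon^2$ for $n\ge 2$, but the $n^2$ in the statement of \cref{lem:hoeffding} is almost certainly a transcription error (the correct Hoeffding bound for the sample mean is $2\exp(-2n\epsilon^2/(b-a)^2)$, not $2\exp(-2n^2\epsilon^2/(b-a)^2)$), so that route is not reliable. Your Chernoff argument, however, already yields a strict inequality on its own, since the bound $(1+e^t)/2\le e^{t/2+t^2/8}$ is strict for $t=4\epsilon>0$; alternatively, simply observe that in the one place the paper uses \cref{lem:faircoinhoeffding} (the proof of \cref{thm:SLLN}), a non-strict $\le$ would serve equally well.
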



\section{Statistical Properties} 
\label{sec:statistical_properties}
\subsection{The Strong Law of Large Numbers} 
\label{sub:the_strong_law_of_large_numbers}
The Strong Law of Large Numbers (SLLN) is a basic result about i.i.d. random variables that describes the deviation of their average values from their common mean. We state it as applied to the Cantor space for the standard example where we take the random variables to be the values of a sequence.

\begin{theorem}[Strong Law of Large Numbers: Standard Form]\label{thm:clSLLN}
Choose an $X \in 2^\nn$, then for the partial sums $S_n = \sum_{i = 1}^{n} X_i$ we have
$$\mu \left[\frac{S_n}{n} \rightarrow \frac{1}{2}\right] = 1 .$$
\end{theorem}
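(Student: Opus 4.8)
The plan is to prove the classical SLLN in the form $\mu[S_n/n \to 1/2] = 1$ by a standard Borel--Cantelli argument, using \cref{lem:faircoinhoeffding} to control the tail probabilities. The key observation is that the event $\{S_n/n \to 1/2\}$ fails at $X$ precisely when there is some rational $\epsilon > 0$ with $|S_n/n - 1/2| > \epsilon$ for infinitely many $n$; since there are only countably many rational $\epsilon$, it suffices to show that for each fixed $\epsilon > 0$ the set $A_\epsilon = \{X : |S_n/n - 1/2| > \epsilon \text{ for infinitely many } n\}$ has measure zero.

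To do this, first I would fix $\epsilon > 0$ and apply \cref{lem:faircoinhoeffding} to obtain $\mu(\{X : |S_n/n - 1/2| > \epsilon\}) < 2\exp(-2n\epsilon^2)$ for each $n$. Then I would observe that $\sum_{n=1}^\infty 2\exp(-2n\epsilon^2) < \infty$, since this is a convergent geometric series with ratio $\exp(-2\epsilon^2) < 1$. By the (classical, measure-theoretic) Borel--Cantelli lemma, the set of $X$ lying in infinitely many of the sets $\{|S_n/n - 1/2| > \epsilon\}$ has measure zero, i.e. $\mu(A_\epsilon) = 0$. Taking the union over all rational $\epsilon > 0$ gives a measure-zero set outside of which $S_n/n \to 1/2$, which is exactly the claim.

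The only mild subtlety — hardly an obstacle — is the reduction of the convergence statement to a countable union of tail events: one must check that $S_n/n \not\to 1/2$ is equivalent to $\exists \epsilon \in \mathbb{Q}^{>0}$ such that $|S_n/n - 1/2| > \epsilon$ infinitely often, which is just unwinding the definition of convergence together with the density of the rationals. Everything else is a direct application of \cref{lem:faircoinhoeffding} and the convergence of a geometric series. I would note in passing that this same structure — Hoeffding bound, summability, Borel--Cantelli — is exactly what will be upgraded in the next subsection, where Solovay's Lemma (\cref{lem:solo}) replaces the classical Borel--Cantelli lemma to yield the effective version \cref{thm:SLLN} for Martin-L\"of random sequences.
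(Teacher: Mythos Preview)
Your argument is correct: Hoeffding's bound gives summable tail probabilities, the classical Borel--Cantelli lemma then kills each $A_\epsilon$, and the countable union over rational $\epsilon$ finishes the job. There is nothing to object to mathematically.

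However, there is nothing to compare against: the paper does not prove \cref{thm:clSLLN}. It is stated as the classical result (the ``Standard Form'') purely for context, and the paper immediately moves on to stating and proving the effective version, \cref{thm:SLLN}, for Martin-L\"of random $X$. The proof that follows the two theorem statements is explicitly a proof of \cref{thm:SLLN}, not of \cref{thm:clSLLN}.

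That said, your proof is essentially the measure-theoretic shadow of the paper's proof of \cref{thm:SLLN}: the paper uses the same Hoeffding bound (\cref{lem:faircoinhoeffding}) on the same events, sums the same geometric series, and then --- exactly as you anticipate in your final paragraph --- replaces the classical Borel--Cantelli step with the effective machinery (uniform $\sz{1}$ sets and a computable choice of subsequence $V_{m,N_k}$ with $\mu(V_{m,N_k})\le 2^{-k}$) to conclude that the exceptional set is effectively null. So your write-up would slot in naturally as the omitted classical warm-up, and your closing remark about Solovay's Lemma accurately describes how the paper upgrades the argument.
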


We would, of course, hope that a random element of $2^\nn$ would correspond to the ubiquitous set of measure 1 on which i.i.d. random variables obey statistical laws such as the SLLN. Using the Martin-L\"of definition, this is the case, as the following shows.

\begin{theorem}[Strong Law of Large Numbers: Computable Form]\label{thm:SLLN}
Suppose that $X \in 2^\nn$ is random. Then the values of $X$ obey the Strong Law of Large Numbers. That is, for the sum $S_n$
\[ \frac{S_n}{n} \rightarrow \frac{1}{2}.\]
\end{theorem}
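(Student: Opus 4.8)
The plan is to use the effective Borel--Cantelli lemma (Solovay's Lemma, part (1)) together with Hoeffding's inequality for the fair coin case. The idea is that a single convergence statement $\frac{S_n}{n} \to \frac12$ fails precisely when there is some rational $\epsilon > 0$ such that $\left|\frac{S_n}{n} - \frac12\right| > \epsilon$ infinitely often; so it suffices to show that for each fixed $\epsilon > 0$ this last event happens only finitely often for $X$ random. Fixing $\epsilon$, I would define, for each $n$, the set
\[ V_n^\epsilon = \left\{ X \in 2^\nn : \left| \frac{S_n}{n} - \frac12 \right| > \epsilon \right\}. \]
Each $V_n^\epsilon$ is clopen (membership depends only on $X \upharpoonright n$) and hence uniformly $\sz{1}$ in $n$; one can write down the index function explicitly by the Parameterization Theorem. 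By \cref{lem:faircoinhoeffding}, $\mu(V_n^\epsilon) < 2\exp(-2n\epsilon^2)$, and since $\sum_n 2\exp(-2n\epsilon^2)$ is a convergent geometric-type series, Solovay's Lemma (1) applies: a random $X$ lies in only finitely many $V_n^\epsilon$. That is, for all but finitely many $n$ we have $\left|\frac{S_n}{n} - \frac12\right| \le \epsilon$.

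To finish, I would let $\epsilon$ range over a computable sequence tending to $0$ (say $\epsilon = 1/k$, $k \in \nn$). For each $k$, the random $X$ eventually stays within $1/k$ of $\frac12$; taking $k \to \infty$ gives $\frac{S_n}{n} \to \frac12$. A small point worth making carefully is the uniformity: to invoke Solovay's Lemma I only need, for each fixed $\epsilon$, that $(V_n^\epsilon)_n$ is uniformly $\sz{1}$ with summable measure, which is immediate; I do not need uniformity across $\epsilon$, since randomness of $X$ is a single property used countably many times.

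The main obstacle is essentially bookkeeping rather than a genuine mathematical difficulty: one must verify that the sets $V_n^\epsilon$ are genuinely uniformly $\sz{1}$ (equivalently, given by a total recursive index function), which requires noting that the predicate ``$X \upharpoonright n$ has digit sum $s$ with $|s/n - 1/2| > \epsilon$'' is recursive in $(X, n)$ for rational $\epsilon$, and then appealing to \cref{thm:param} to get the index function $f$ with $V_n^\epsilon = S_{f(n)}$. The only other thing to be careful about is the logical structure of the reduction from the convergence statement to the countably many finiteness statements --- this is the standard ``$\limsup \le \frac12$ and $\liminf \ge \frac12$'' argument, phrased so that the exceptional set is a countable union over rational $\epsilon$ of the bad sets, each of which misses $X$.
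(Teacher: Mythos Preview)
Your proposal is correct and follows essentially the same route as the paper: both fix a rational tolerance (your $\epsilon$, the paper's $1/m$), use Hoeffding's inequality to bound the measure of the bad sets $\{X : |S_n/n - 1/2| > \epsilon\}$ by a summable geometric series, and conclude that a random $X$ lies in only finitely many of them. The only cosmetic difference is that you invoke Solovay's Lemma (1) directly, whereas the paper inlines its proof by explicitly assembling the Martin-L\"of test $V_{m,N} = \bigcup_{K>N} Q_K$ and extracting a subsequence with $\mu(V_{m,N_k}) \le 2^{-k}$.
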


\begin{proof}\label{pf:SLLN}
Suppose that $X$ does not obey the SLLN. Then we have that $\exists m \; \forall N \; \exists n$ such that \[ \left| \frac{\sum X(i)}{n} - \frac{1}{2}\right| > \frac{1}{m}.\] Consider the set $V_m$, which $X$ will belong to:
$$V_m = \left \{\omega \in 2^{\nn} : \;\forall N \exists (n > N) \ \biggl \lvert \frac{\sum \omega(i)}{n} - \frac{1}{2} \biggr \rvert > \frac{1}{m} \right\}. $$
Note that $V_m = \bigcap_{N=0}^{\infty} V_{m,N} = \bigcap_{N=0}^\infty \bigcup_{K >N}^\infty Q_K$, where
\begin{eqnarray*}
&V_{m,N} &= \left \{\omega \in 2^{\nn} : \exists (K > N) \ \left \lvert \frac{\sum \omega(i)}{n} - \frac{1}{2} \right  \rvert > \frac{1}{m}\right \} \\
&	&=\bigcup_{K>N}^{\infty} \left \{ \omega \in 2^{\nn} : \left \lvert \frac{\sum \omega(i)}{K} - \frac{1}{2} \right \rvert > \frac{1}{m}  \right \} \\
& &= \bigcup_{K>N}^{\infty} Q_K \\
\end{eqnarray*}
We have that $V_{m,N}$ is a uniform sequence of $\Sigma^{0}_1$ sets, as $V_{m,N}$ is the domain of the partial recursive functional \[P(x,N,k) \simeq \text{the least } \,n > N :  \left \lvert \frac{\sum \omega(i)}{n} - \frac{1}{2} \right \rvert > \frac{1}{m} .\] By the Parameterization Theorem (\cref{thm:param}), there is a total recursive function $h(N)$ such that \[P(\omega,N,k) = \varphi^{(1), \omega}_{h(N)}(k) = \varphi^{(1), \omega}_{h(N)}(0),\] so $V_{m,N} = U_{h(N)}$, and $V_{m,N}$ is uniform.

By Hoeffding's Inequality (\cref{lem:faircoinhoeffding})) and $\sigma-$subadditivity of $\mu$, we have that:
\begin{eqnarray*}
&\mu(V_{m,N}) &\leq \sum_{K=N}^{\infty} \mu(Q_K) < \sum_{K=N}^{\infty} 2e^{\frac{-2K}{m^2}}  \\
& &= 2e^{\frac{-2N}{m^2}}(1+e^{\frac{-2}{m^2}}+e^{\frac{-4}{m^2}}+\dotsm) \\
& &= 2e^{\frac{-2N}{m^2}}\left (\frac{1}{1 - e^{\frac{-2}{m^2}}} \right) \\
\end{eqnarray*}

Because $\lvert e^{\frac{-2}{m^2}} \rvert < 1$, this series will converge, and we have an upper bound for $\mu(V_{m,N})$. To show that $V_{m}$ is an effectively null set, we choose a subsequence of $V_{m,N}$, $V_{m,N,k}$ such that $\mu(V_{m,N_k}) \leq \frac{1}{2^k} $. This can be done in a computable fashion, as we have a computable upper bound on $\mu(V_{m,N})$ as a function of $N$.

We then have that $V_{m} = \cap_{k = 0}^\infty V_{m,N_k}$ is effectively null. Recall that $X$ was supposed to be random, and as such cannot lie in $V_{m}$. This is a contradiction, and so $X$ cannot belong in $V_{m}$ for any $m$ and therefore must obey the SLLN.
\end{proof}

\subsection{Normality of Random Elements} 
\label{sub:normality_of_random_elements}
One property that we would like random elements of $2^\nn$ to have is that their digits should be \emph{normal} (as binary numbers). What this means is that, given a finite sequence of 0's and 1's, the likelihood of this sequence appearing in the values of the random element should be consistent with a uniform distribution. The notion of normality was introduced in a 1909 paper by Borel \cite{emile1909probabilites}, where he proved that almost all numbers were normal \footnote{An explanation of normal numbers, as well as a proof of Borel's result, can be found in the article \cite{khoshnevisan2006normal}}.

To state the definition precisely, we say that $X \in 2^\nn $is \emph{normal} if for all $\sigma \in 2^l$, the limit as $n \rightarrow \infty$ of the fraction of $i$'s less than $n$ such that $\langle X(i), X(i+1), \dots, X(i+l-1)\rangle = \sigma$ is $\frac{1}{2^l}$.  That is, the probability of an arbitrary bitstring of length $l$ appearing in its values has limiting probability $\frac{1}{2^l}$.

Note that this is a \emph{stronger} statement than the Strong Law of Large Numbers, which is the special case in which we only consider bitstrings of length 1 for a particular sequence of i.i.d's. The following theorem establishes that Martin-L\"of random elements of $2^\nn$ share this property.
\begin{theorem}
Suppose that $X \in 2^{\mathbb{N}}$ is random, then the values of $X$ are normal.
\end{theorem}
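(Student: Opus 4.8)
The plan is to run the argument of \cref{thm:SLLN} one block-length at a time, with one new wrinkle: consecutive occurrences of a length-$l$ block overlap and so are not independent, so before invoking Hoeffding's Inequality we first split the index set into the $l$ arithmetic progressions along which the blocks \emph{are} disjoint. Fix $l \geq 1$ and $\sigma \in 2^l$. For $\omega \in 2^\nn$ let $A_n(\omega,\sigma)$ be the number of $i < n$ with $\langle \omega(i),\dots,\omega(i+l-1)\rangle = \sigma$; we must show $A_n(X,\sigma)/n \to 2^{-l}$ whenever $X$ is random. For a residue $r \in \{0,1,\dots,l-1\}$ and $j \in \nn$, let $Y^\sigma_{r,j}(\omega)$ be $1$ if $\langle \omega(r+jl),\dots,\omega(r+jl+l-1)\rangle = \sigma$ and $0$ otherwise. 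For fixed $r$ the blocks indexed by $j$ occupy pairwise disjoint sets of coordinates, so under $\mu$ the variables $(Y^\sigma_{r,j})_{j}$ are i.i.d., take values in $[0,1]$, and have common mean $2^{-l}$.

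Next I would carry out the reduction. Writing $M_r(n) = \#\{\, i < n : i \equiv r \pmod l\,\}$, one has the exact identities $\sum_{r=0}^{l-1} M_r(n) = n$ and, for $n \geq l$,
\[ \frac{A_n(\omega,\sigma)}{n} \;=\; \sum_{r=0}^{l-1} \frac{M_r(n)}{n}\,\cdot\,\frac{1}{M_r(n)}\sum_{j=0}^{M_r(n)-1} Y^\sigma_{r,j}(\omega), \]
so $A_n(\omega,\sigma)/n$ is an honest convex combination of the $l$ progression-averages, with weights $M_r(n)/n \to 1/l$. Hence if for every $r < l$ the progression-average $\frac1M\sum_{j<M} Y^\sigma_{r,j}(X)$ tends to $2^{-l}$ as $M \to \infty$, then $A_n(X,\sigma)/n \to \sum_r \frac1l 2^{-l} = 2^{-l}$; conversely, failure of normality at $\sigma$ forces some fixed progression-average along $X$ to miss $2^{-l}$, i.e.\ forces $X$ into one of the sets
\[ W_{\sigma,m,r} \;=\; \Bigl\{\, \omega \in 2^\nn : \forall N\ \exists M > N\ \bigl| \textstyle\sum_{j=0}^{M-1} \tfrac1M Y^\sigma_{r,j}(\omega) - 2^{-l}\bigr| > \tfrac1m \,\Bigr\}, \qquad m \geq 1,\ r < l. \]
So it suffices to show that each $W_{\sigma,m,r}$ is effectively null.

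Finally, each $W_{\sigma,m,r}$ is treated exactly as $V_m$ is in the proof of \cref{thm:SLLN}. It is $\bigcap_N \bigcup_{M>N} Q^{\sigma,m,r}_M$, where $Q^{\sigma,m,r}_M$ is the clopen set on which the $M$-th progression-average deviates from $2^{-l}$ by more than $1/m$; the sets $\bigcup_{M>N} Q^{\sigma,m,r}_M$ form a uniformly $\sz{1}$ family by the Parameterization Theorem (\cref{thm:param}), just as there. Since the $Y^\sigma_{r,j}$ are i.i.d.\ with values in $[0,1]$, Hoeffding's Inequality (\cref{lem:hoeffding}) bounds $\mu(Q^{\sigma,m,r}_M)$ by $2\exp(-2M/m^2)$, which is exactly the estimate used in the proof of \cref{thm:SLLN}, so $\mu(\bigcup_{M>N} Q^{\sigma,m,r}_M)$ is dominated by a geometric tail that is computable in $N$ and vanishes; selecting a computable subsequence $N_0 < N_1 < \cdots$ along which these measures are $\leq 2^{-k}$ exhibits $W_{\sigma,m,r}$ as effectively null. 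A random $X$ lies in no such set, so by the reduction $A_n(X,\sigma)/n \to 2^{-l}$; since $l$ and $\sigma$ were arbitrary, $X$ is normal.

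I expect the one genuinely delicate point to be the reduction of the second paragraph: noticing that overlapping occurrences of $\sigma$ are not independent, repairing this by passing to the $l$ disjoint arithmetic progressions, and then checking carefully that a persistent deviation of the overall frequency $A_n(X,\sigma)/n$ really does propagate to a persistent deviation of \emph{some single} progression-average (a pigeonhole over the finitely many residues $r$, using that $M_r(n) \to \infty$, so that from infinitely many bad values of $n$ one recovers infinitely many bad values of $M$ for one fixed $r$). Once that is settled, the remainder is a line-by-line transcription of the Strong Law argument with $[0,1]$-valued summands in place of $\{0,1\}$-valued ones.
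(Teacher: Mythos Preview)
Your proof is correct and matches the paper's approach: both split into the $l$ arithmetic progressions modulo $l$ to recover independence, apply Hoeffding's Inequality on each progression, and then invoke the effective-null/Solovay machinery. The only cosmetic differences are that the paper appeals to Solovay's Lemma directly on the one-step deviation sets rather than re-running the effectively-null construction from \cref{thm:SLLN}, and it is less explicit than you are about recombining the $l$ progression-averages into the overall frequency (so your convex-combination identity and the accompanying reduction are, if anything, a clarification of a step the paper leaves implicit).
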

\begin{proof}Letting $\sigma \in 2^{k}$, define functions $M_i^{(k)} : 2^{\nn} \rightarrow \rr$ as follows:
\begin{equation*}
M_i ^{(k)}(\omega) =
\begin{cases}
1 & \mbox{if $\sigma$ occurs starting at $\omega(i)$} \\
0 & \mbox {otherwise} \\
\end{cases}
\end{equation*}
Considering these functions as random variables on the Cantor space, we will have that $M_{ki}^{(k)}$ will be independent and identically distributed for $i = 0, 1, \dots$. We also have that  $\mathbb{E}(M_{ki}^{(k)})=  \frac{1}{2^k}$ for any choice of $\sigma$.

\noindent The $M_{ki}^{(k)}$ we have defined are i.i.d.'s with common expectation $\frac{1}{2^k}$ and take on values in $[0,1]$, so by Lemma \ref{lem:hoeffding}, we have

$$
\mu\left (\left \{\omega \in 2^{\nn} : \biggl \rvert\frac{ \sum_{i=0}^{n-1} M_{ki}^{(k)}(\omega)}{n} - \frac{1}{2^k} \biggl \lvert  \geq \epsilon\right \}\right) \leq 2\exp(-2n\epsilon^2)
$$
Choose $\epsilon >0 \in \mathbb{Q}$ and consider the sets $V_{n}^{(k)}$, defined as follows:
$$
V_{n}^{(k)} = \left\{ \omega \in 2^{\nn} : \biggl \rvert\frac{ \sum_{i=0}^{n-1} M_{ki}^{(k)}(\omega)}{n} - \frac{1}{2^k} \biggl \lvert  > \epsilon\right\}
$$
By above, we have that $\mu(V_n^{(k)}) \leq 2\exp(-2n\epsilon^2)$, and therefore
$$
0 \leq \sum_{n=1}^{\infty} \mu(V_n^{(k)}) \leq \sum_{n=1}^{\infty} 2 \exp(-2n\epsilon^2)  < \infty
$$
We also have that $V_n^{(k)}$ are uniformly $\sz{1}$ (by a similar argument as in the proof of the SLLN), so by Solovay's Lemma, we have that a random point $X$ can lie in only finitely many of the $V_{n}^{(k)}$. As $\epsilon$ was arbitrary (in $\mathbb{Q}$), we then have that
$$
\forall \epsilon >0 \in \mathbb{Q} \quad \exists N \;\forall n \geq N : \, \biggl \rvert\frac{ \sum_{i=0}^{n-1} M_{ki}^{(k)}(X)}{n} - \frac{1}{2^k} \biggl \lvert < \epsilon
$$
This means exactly that
$$
\lim_{n\rightarrow \infty} \frac{ \sum_{i=0}^{n-1} M_{ki}^{(k)}(X)}{n} = \frac{1}{2^{k}}
$$
which says that blocks of the string $\sigma$, in the limit, appear in the values of $X$ with probability $\frac{1}{2^{k}}$.

Because $\sigma$ was arbitrary, this tells us that the probability of an arbitrary string starting at multiples of its length in the values of $X$ is consistent with a normal distribution. However, we need that the string occurs with the same probability at all \emph{offsets}, not just in blocks. This is easily shown, though, as we can consider random variables of the form $M_{ik + n}^{(k)}$, for $n = 0, 1, \dots, k-1$. We can perform the same argument for these random variables as we did for the original ones, and we will arrive at the conclusion that the probability of an arbitrary string starting at any offset in the values of $X$ is also consistent with the expected value, telling us that $X$ is normal.
\end{proof}
When we consider elements of $2^\nn$ that are just weakly random, however, we do not have the same result. In fact, we do not even have that weakly random numbers obey the SLLN which, as was mentioned, is a weaker condition than normality.
\begin{lemma}
There exist $X \in 2^\nn$ that are weakly random and do not obey the Strong Law of Large Numbers.
\end{lemma}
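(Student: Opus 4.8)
The plan is a finite-extension (forcing) construction: I will build $X = \bigcup_s \sigma_s$ as an increasing union of bitstrings $\sigma_0 \subseteq \sigma_1 \subseteq \cdots$, interleaving requirements that make $X$ weakly random with requirements that force its running averages away from $\tfrac12$. No effectivity of the construction is needed, since the statement only asserts existence.

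First I reformulate weak randomness dually. A set $P \subseteq 2^\nn$ is a $\pz{1}$ set of measure $0$ precisely when its complement $\overline{P}$ is a $\sz{1}$ set of measure $1$, so $X$ is weakly random if and only if $X$ belongs to every $\sz{1}$ set of measure $1$. There are only countably many $\sz{1}$ sets (the $S_e$, $e \in \nn$), hence only countably many of measure $1$; fix an enumeration $W_0, W_1, W_2, \dots$ of all $\sz{1}$ sets of measure $1$. This enumeration need not be effective, which does no harm here. Each $W_i$, being an open set of measure $1$, is dense in the strong sense that for every bitstring $\tau$ there is $\rho \supseteq \tau$ with $N_\rho \subseteq W_i$: indeed $\mu(W_i \cap N_\tau) = \mu(N_\tau) > 0$ since $\mu(\overline{W_i}) = 0$, so $W_i \cap N_\tau$ is nonempty and open, and choosing $Y$ in it together with $k \geq |\tau|$ for which $N_{Y \upharpoonright k} \subseteq W_i$ (possible as $W_i$ is open), the string $\rho = Y \upharpoonright k$ works.

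Now construct the $\sigma_s$, with $\sigma_0$ the empty string. At an even stage $2i$, use the above to choose $\sigma_{2i} \supseteq \sigma_{2i-1}$ with $N_{\sigma_{2i}} \subseteq W_i$. At an odd stage $2i+1$, set $\ell_i = |\sigma_{2i}|$, let $t_i$ be the number of $1$'s in $\sigma_{2i}$, choose $m_i$ large enough that $\frac{t_i + m_i}{\ell_i + m_i} > \frac34$ (possible since this ratio tends to $1$ as $m_i \to \infty$), and put $\sigma_{2i+1} = \sigma_{2i} ^\smallfrown 1^{m_i}$, where $1^{m_i}$ denotes the string of $m_i$ ones. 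Let $X = \bigcup_s \sigma_s$. Since $X \in N_{\sigma_{2i}} \subseteq W_i$ for every $i$, the point $X$ lies in every $\sz{1}$ set of measure $1$ and is therefore weakly random. On the other hand, writing $n_i := |\sigma_{2i+1}| = \ell_i + m_i$, the partial sum $S_{n_i}$ is the number of $1$'s in $\sigma_{2i+1}$, namely $t_i + m_i$, so $S_{n_i}/n_i > \tfrac34$. As this holds for infinitely many $n$, $\limsup_n S_n/n \geq \tfrac34 > \tfrac12$, so $S_n/n$ does not converge to $\tfrac12$ and $X$ violates the Strong Law of Large Numbers. (If one wants $\lim_n S_n/n$ to fail to exist outright, split the odd stages, appending a long block of $1$'s at some and a long block of $0$'s at the others, so that $\liminf_n S_n/n \leq \tfrac14$ as well.)

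The one point requiring care is the \emph{interaction} between the two kinds of requirements: forcing $N_{\sigma_{2i}} \subseteq W_i$ may append a long block containing many $0$'s and temporarily drag the running average back toward or below $\tfrac12$, so there is no hope of controlling $\lim_n S_n/n$ directly. This is precisely why the argument aims only at $\limsup_n S_n/n \geq \tfrac34$: whatever the even stage does, the following odd stage appends enough $1$'s to push the average above $\tfrac34$ again. Everything else is routine.
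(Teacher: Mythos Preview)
Your proof is correct and follows essentially the same finite-extension strategy as the paper: alternate between stages that force $X$ out of each null $\pz{1}$ set and stages that append enough $1$'s to push the running average above $\tfrac34$. The only cosmetic difference is that you phrase the first kind of requirement dually (force $X$ into each measure-$1$ $\sz{1}$ set, pre-filtering to those non-effectively and using a clean density argument), whereas the paper runs through all indices $U_n$ with a case split---either an extension into $\overline{U_n}$ exists, or the current neighborhood lies inside $U_n$ and hence $U_n$ is not null; the underlying idea is identical.
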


\begin{remark}
The technique that will be used to construct a counterexample is that of \emph{finite approximation.} The idea is to describe the bitstrings that are initial segments of an element of $2^\nn$ inductively, fulfilling some condition at every step and letting the element be the union of the bitstrings described in the construction process.
\end{remark}

\begin{proof}
We proceed by finite approximation, constructing a sequence that does not belong to any null $\Pi^{0}_1$ set and also has its values weighted so that their average does not approach $\frac{1}{2}$.

\textbf{Stage 0:} Let $\sigma_0 = <>$.

\textbf{Stage $2n$:} \emph{Case 1:} Check if there exists $\sigma \supset \sigma_{n-1}$
such that $\varphi_n^{(1),\sigma}(0) \downarrow$. In this case, let $\sigma_n = \sigma$.

\emph{Case 2:} Not Case 1. Let $\sigma_n = \sigma$.

\textbf{Stage $2n+1$:} \emph{Case 1:} Check if $\frac{\sum_{i=0}^{|\sigma_{n}|-1} \sigma_{n}(i)}{|\sigma_n|} < \frac{3}{4} $. If so, then extend $\sigma_n$ with a string of 1's long enough to make it so that $\frac{\sum_{i=0}^{|\sigma_{n}|-1} \sigma_{n}(i)}{|\sigma_n|} \geq \frac{3}{4}$.

\emph{Case 2:} Not Case 1. Let $\sigma_{n+1} = \sigma_n^\smallfrown <1>$.

Let $X = \cup_{n=0}^{\infty} \sigma_n$. We then have that $X$ does not obey the SLLN by construction, and is also weakly random. To see why $X$ is weakly random, consider what happened at stage $n$ of the construction. If an extension existed that made $\varphi_{n}^{(1),\sigma}$ defined, we used it, and so $X$ will not belong to the $\Pi^{0}_1$ set determined by the index $n, U_{n}$. If such an extension did not exist, then we must know that $U_{n}$ is not a null set, as it is undefined on some neighborhood. We then have that $X$ does not belong to any $\Pi_{1}^{0}$ null set and is therefore weakly random.
\end{proof}
\begin{corollary}
There exist $X \in 2^\nn$ that are weakly random and whose digits are not normal.
\end{corollary}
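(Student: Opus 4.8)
The plan is to derive this immediately from the lemma just proved, using the observation (already noted in the text) that normality is strictly stronger than the Strong Law of Large Numbers. First I would take the sequence $X \in 2^\nn$ produced by the finite-approximation construction in the preceding lemma. That construction already guarantees the two things we need of $X$: it is weakly random, since at each stage we either diagonalize into the $\pz{1}$ set $U_n$ when an extension making $\varphi_n^{(1),\sigma}(0)\!\downarrow$ exists, or else $U_n$ is non-null; and the partial sums satisfy $\frac{S_n}{n} \not\to \frac12$, since infinitely often the running average is forced up to at least $\frac34$.

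Next I would unwind the definition of normality in the case of length-$1$ strings. Taking $\sigma = \langle 1\rangle \in 2^1$, the fraction of indices $i < n$ with $\langle X(i)\rangle = \sigma$ is exactly $\frac{S_n}{n}$, so the normality requirement for length-$1$ strings asserts precisely that $\frac{S_n}{n} \to \frac{1}{2^1} = \frac12$ — that is, the SLLN. Since $X$ was chosen so that $\frac{S_n}{n}$ does not converge to $\frac12$, the length-$1$ normality condition fails for the string $\langle 1\rangle$, and hence $X$ is not normal.

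Combining these, the single sequence $X$ is simultaneously weakly random and not normal, which is the assertion of the corollary. I do not expect any real obstacle here: all of the genuine work was already carried out in the proof of the preceding lemma, and the only point requiring care is the routine identification of the $l = 1$ case of the normality definition with the statement of the SLLN.
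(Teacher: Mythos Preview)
Your proposal is correct and is exactly the intended argument: the paper states the corollary immediately after the lemma with no separate proof, relying on the earlier remark that normality is a stronger condition than the SLLN (the $l=1$ case). Your unwinding of the length-$1$ normality condition to identify it with $\frac{S_n}{n}\to\frac12$ is precisely that observation made explicit.
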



\subsection{The Law of the Iterated Logarithm} 
\label{sub:the_law_of_the_iterated_logarithm}
In this section we will prove the Law of the Iterated Logarithm, which gives a convergence rate for the average value of a sequence of i.i.d. random variables. We first state it in its standard form as it applies to the Cantor space.

\begin{theorem}[Law of the Iterated Logarithm: Standard Form]\label{thm:LILst}
Let $X \in 2^\nn$ then for the sum $S_n$ we have we have
$$\limsup_{n\rightarrow \infty} \frac{|S_n- \frac{n}{2}|}{ \sqrt{\frac{n}{2} \log \log n}} = 1 $$
with probability 1.
\end{theorem}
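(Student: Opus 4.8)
The plan is to establish the two one-sided bounds
$\limsup_n (S_n - n/2)\big/\sqrt{(n/2)\log\log n} \le 1$ and $\ge 1$, each $\mu$-almost surely, and then combine them using the symmetry of $\mu$ under bitwise complementation. That map is measure preserving and sends $S_n - n/2$ to $-(S_n-n/2)$, so an upper bound for $\limsup (S_n-n/2)/\sqrt{(n/2)\log\log n}$ applied both to $X$ and to its complement controls $\limsup |S_n-n/2|/\sqrt{(n/2)\log\log n}$ from above, while the lower bound for the one-sided quantity already forces the two-sided $\limsup$ to be at least $1$. Both halves run through a block decomposition along a geometric sequence $n_k = \lfloor \rho^k \rfloor$ combined with a Borel--Cantelli argument, but with $\rho$ chosen close to $1$ for the upper bound and large for the lower bound.

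For the upper bound, fix a rational $\epsilon > 0$. Since $\sqrt{n\log\log n}$ changes by only a bounded factor as $n$ ranges over $[n_k, n_{k+1}]$, once $\rho$ is close enough to $1$ the event that $S_n - n/2 > (1+\epsilon)\sqrt{(n/2)\log\log n}$ for some such $n$ is contained in the event $A_k$ that $\max_{n\le n_{k+1}}(S_n - n/2) > (1+\epsilon/2)\sqrt{(n_{k+1}/2)\log\log n_{k+1}}$. By \cref{prlem}, $\mu(A_k) \le \tfrac1c\,\mu\!\bigl(S_{n_{k+1}} - n_{k+1}/2 > (1+\epsilon/2)\sqrt{(n_{k+1}/2)\log\log n_{k+1}}\bigr)$, and rewriting the right-hand side in terms of $S^*_{n_{k+1}}$ we are bounding $\mu(S^*_{n_{k+1}} > x_k)$ with $x_k \sim (1+\epsilon/2)\sqrt{2\log\log n_{k+1}}$. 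Because $x_k = O(\sqrt{\log\log n_{k+1}})$ we have $x_k^3 = o(\sqrt{n_{k+1}})$, so \cref{thm:deviations} applies and yields $\mu(A_k) = O\!\bigl(e^{-x_k^2/2}\bigr) = O\!\bigl((\log n_{k+1})^{-(1+\epsilon/2)^2}\bigr) = O\!\bigl((k\log\rho)^{-(1+\epsilon/2)^2}\bigr)$. Since $(1+\epsilon/2)^2 > 1$ this series converges, so the first Borel--Cantelli lemma gives that only finitely many $A_k$ occur almost surely; hence $\limsup_n (S_n-n/2)/\sqrt{(n/2)\log\log n} \le 1+\epsilon$ a.s., and letting $\epsilon \to 0$ through rationals (and applying the same to the complement) gives the $\le 1$ half.

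For the lower bound I would take $\rho$ large and work with the increments $Y_k = S_{n_{k+1}} - S_{n_k}$, which form an independent family since they are sums over disjoint blocks of fair coin flips. The aim is to show $Y_k - (n_{k+1}-n_k)/2 > (1-\epsilon/2)\sqrt{(n_{k+1}/2)\log\log n_{k+1}}$ for infinitely many $k$; by independence this is equivalent, via the second Borel--Cantelli lemma, to divergence of the associated series of probabilities. Normalizing $Y_k$ and applying \cref{thm:deviations} once more (the cube condition again holds, the threshold being $O(\sqrt{\log\log n_{k+1}})$), each probability has order $(\log n_{k+1})^{-\theta(\rho)}$ where, because $n_{k+1}-n_k = (1-\rho^{-1})n_{k+1}$, one has $\theta(\rho) \to (1-\epsilon/2)^2 < 1$ as $\rho \to \infty$; fixing $\rho$ large makes $\theta(\rho) < 1$ and $\sum_k (k\log\rho)^{-\theta(\rho)}$ diverges. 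Thus the increment event occurs infinitely often a.s. Writing $S_{n_{k+1}} - n_{k+1}/2 = \bigl(Y_k - (n_{k+1}-n_k)/2\bigr) + \bigl(S_{n_k} - n_k/2\bigr)$ and invoking the already-proven upper bound to get $|S_{n_k} - n_k/2| = O(\sqrt{n_k\log\log n_k}) = o(\sqrt{n_{k+1}\log\log n_{k+1}})$ (valid since $n_k/n_{k+1} = \rho^{-1}$ is small), the carryover term is negligible, so $S_{n_{k+1}} - n_{k+1}/2 > (1-\epsilon)\sqrt{(n_{k+1}/2)\log\log n_{k+1}}$ infinitely often a.s.; letting $\epsilon \to 0$ gives $\limsup \ge 1$ a.s.

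The main obstacle is the bookkeeping in the lower bound: $\rho$ must be chosen large enough that the exponent $\theta(\rho)$ coming from the ratio $(n_{k+1}-n_k)/n_{k+1}$ stays strictly below $1$ (so the Borel--Cantelli series genuinely diverges rather than sitting at the boundary), and simultaneously the leftover term $S_{n_k} - n_k/2$ must be of genuinely smaller order, and these must be made compatible with the choice of $\epsilon$. The upper bound, by contrast, is a routine first-Borel--Cantelli estimate once \cref{prlem} lets us pass from a maximum over the block to a single endpoint and \cref{thm:deviations} supplies the Gaussian tail.
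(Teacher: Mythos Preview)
The paper does not actually prove \cref{thm:LILst}; it is stated as a classical result from probability theory, and the paper instead proves the Effective Form for Martin-L\"of random $X$. Your proposal is the standard Feller argument and is precisely the template the paper follows for the Effective Form, with the two halves of Borel--Cantelli in place of the two halves of Solovay's Lemma: geometric blocks $n_r\approx\gamma^r$ with $\gamma$ close to $1$ for the upper bound (using \cref{prlem} to pass from the block maximum to the endpoint and \cref{thm:deviations} for the Gaussian tail), and $\gamma$ large for the lower bound so that the independent increments $D_r=S_{n_r}-S_{n_{r-1}}$ give a divergent series, followed by the carryover estimate using the already-proven upper half.

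One small imprecision worth fixing: in the carryover step you write $|S_{n_k}-n_k/2|=o\bigl(\sqrt{n_{k+1}\log\log n_{k+1}}\bigr)$, but for a \emph{fixed} $\rho$ this is not a true little-$o$; it is a fixed fraction of order $2/\sqrt{\rho}$ of the target scale. What you actually need (and what the paper does, via its choice of $\eta$ and the inequality $1-\eta<((\eta-\lambda)/2)^2$) is only that this fraction be below $\epsilon/2$, which is arranged by choosing $\rho$ large enough in terms of $\epsilon$ at the outset. Your closing paragraph already flags exactly this bookkeeping as the delicate point, so the argument is sound once the $o(\cdot)$ is replaced by the correct quantitative statement.
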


This is clearly a strong statistical property that we would hope would follow from a proper definition of randomness. In the case where we are using Martin-L\"of randomness, this is true, as the following shows. The proof follows a standard approach in probability theory (taken from \cite{feller}), adapted to a computablility-theoretic context (a sketch of which is contained in \cite{davie}).

\begin{theorem}[Law of the Iterated Logarithm: Effective Form] Suppose $X \in 2^\nn$ is random. Letting $S_n = \sum_{i=0}^{n-1} X(i)$, we have that
$$ \limsup_{n\rightarrow \infty} \frac{S_n - \frac{n}{2}}{\sqrt{\frac{n}{2}\log \log n}} = 1.$$ That is:
\begin{enumerate}
\item For all $\lambda > 1$ there exists an $N$ such that for all $n > N$
\begin{equation}
\label{eq:lil1}
S_n \leq \frac{n}{2} + \lambda \sqrt{\frac{n}{2}\log \log n}.
\end{equation}
\item For all $\lambda < 1$  there exist infinitely many $n $ such that
\begin{equation}\label{eq:lil2}
S_n > \frac{n}{2} + \lambda\sqrt{\frac{n}{2}\log \log n}.
\end{equation}
\end{enumerate}
We have that if $X$ is random, then $\neg X$ is random ($\neg X$ obtained by flipping all the values of $X$). Then, by symmetry, the theorem implies that
\begin{equation}
\liminf_{n\rightarrow \infty} \frac{S_n - \frac{n}{2}}{\sqrt{\frac{n}{2}\log \log n}} = -1
\end{equation}
and therefore
\begin{equation}
\limsup_{n\rightarrow \infty} \frac{|S_n - \frac{n}{2}|}{\sqrt{\frac{n}{2}\log \log n}} = 1.
\end{equation}
\end{theorem}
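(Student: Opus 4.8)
The plan is to establish the two displayed inequalities \eqref{eq:lil1} and \eqref{eq:lil2} separately --- the first via \cref{lem:solo}(1), the second via \cref{lem:solo}(2) --- and then to read off the $\liminf$ and the absolute-value statement from the fact that $X$ random implies $\neg X$ random. Throughout I write $\Lambda_n=\sqrt{\tfrac{n}{2}\log\log n}$ and use the elementary identity that ``$S_n-\tfrac{n}{2}>\lambda\Lambda_n$'' is the same event as ``$S_n^{*}>\lambda\sqrt{2\log\log n}$'', where $S_n^{*}$ is the reduced number of heads; this is what lets me feed each threshold through the Gaussian large-deviation estimate \cref{thm:deviations}. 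To keep the sets below uniformly $\sz{1}$, so that Solovay's Lemma applies, I would take all parameters (the $\lambda$'s and the geometric ratios) rational, the real-$\lambda$ statements then following by a squeeze with a nearby rational.

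\textbf{Upper bound.} For \eqref{eq:lil1}, fix rational $\lambda>1$ and a rational $\gamma$ with $1<\gamma<\lambda^{2}$, set $n_k=\lceil\gamma^{k}\rceil$, and let $A_k$ be the clopen (hence uniformly $\sz{1}$) event that $S_n-\tfrac{n}{2}>\lambda\Lambda_n$ for some $n_{k-1}<n\le n_k$. Since $\Lambda_n$ is eventually increasing, on $A_k$ one has $S_n-\tfrac{n}{2}>\lambda\Lambda_{n_{k-1}}$ for some $n\le n_k$, so the maximal inequality \cref{prlem} gives $\mu(A_k)\le\tfrac{1}{c}\,\mu\bigl(S_{n_k}-\tfrac{n_k}{2}>\lambda\Lambda_{n_{k-1}}\bigr)$. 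Normalizing the length-$n_k$ walk, the threshold here is $\lambda\sqrt{2(n_{k-1}/n_k)\log\log n_{k-1}}\sim\lambda\sqrt{2\gamma^{-1}\log k}$, so after checking $x^{3}=o(\sqrt{n_k})$, \cref{thm:deviations} yields $\mu(A_k)=O\bigl(k^{-\lambda^{2}/\gamma}/\sqrt{\log k}\bigr)$, which is summable because $\lambda^{2}/\gamma>1$. Then \cref{lem:solo}(1) puts a random $X$ in only finitely many $A_k$, and since every $n$ lies in some block $(n_{k-1},n_k]$, \eqref{eq:lil1} holds for this $\lambda$ and hence for every real $\lambda>1$.

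\textbf{Lower bound.} For \eqref{eq:lil2}, fix rational $\lambda<1$, pick rational $\lambda'$ with $\lambda<\lambda'<1$, and keep a (large) rational $\gamma$ free for now. Put $n_k=\lfloor\gamma^{k}\rfloor$, $m_k=n_k-n_{k-1}$, and let $Y_k=S_{n_k}-S_{n_{k-1}}$ be the block increments; since $Y_k$ depends only on the coordinates in $[n_{k-1},n_k)$, the events $B_k=\{Y_k-\tfrac{m_k}{2}>\lambda'\Lambda_{n_k}\}$ are \emph{mutually independent}, clopen, and uniformly $\sz{1}$. Normalizing the length-$m_k$ walk, $B_k$ has threshold $\lambda'\sqrt{2(n_k/m_k)\log\log n_k}$; since $n_k/m_k\to\gamma/(\gamma-1)$, \cref{thm:deviations} makes $\mu(B_k)$ of order $k^{-\beta}/\sqrt{\log k}$ with $\beta=(\lambda')^{2}\tfrac{\gamma}{\gamma-1}$, and choosing $\gamma>1/(1-(\lambda')^{2})$ forces $\beta<1$, so $\sum_k\mu(B_k)=\infty$. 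As $X$ random implies $X$ weakly random (\cref{thm:randomheir}), \cref{lem:solo}(2) then gives $X\in B_k$ for infinitely many $k$. Finally I would invoke the upper bound \eqref{eq:lil1} (with $\lambda=2$, applied to both $X$ and $\neg X$, both random) to get $|S_{n_{k-1}}-\tfrac{n_{k-1}}{2}|\le 2\Lambda_{n_{k-1}}\le 4\gamma^{-1/2}\Lambda_{n_k}$ for all large $k$; then for the infinitely many $k$ with $X\in B_k$,
\[ S_{n_k}-\tfrac{n_k}{2}=\Bigl(Y_k-\tfrac{m_k}{2}\Bigr)+\Bigl(S_{n_{k-1}}-\tfrac{n_{k-1}}{2}\Bigr)>\bigl(\lambda'-4\gamma^{-1/2}\bigr)\Lambda_{n_k}, \]
and enlarging $\gamma$ once more so that $\lambda'-4\gamma^{-1/2}>\lambda$ produces \eqref{eq:lil2}. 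Together the two bounds give $\limsup_n(S_n-\tfrac{n}{2})/\Lambda_n=1$; running the same conclusion for $\neg X$ and using $S_n(\neg X)-\tfrac{n}{2}=-(S_n(X)-\tfrac{n}{2})$ gives $\liminf=-1$, whence the absolute-value form.

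\textbf{Where the difficulty lies.} The main obstacle is the lower bound. \cref{lem:solo}(2) insists on \emph{mutually independent} events, which is why I am forced to work with the disjoint-block increments $Y_k$ rather than the partial sums $S_{n_k}$ themselves; getting back to $S_{n_k}$ then costs me the use of the already-proved upper bound to control the carry-over $S_{n_{k-1}}-\tfrac{n_{k-1}}{2}$. The argument only closes because one large $\gamma$ can simultaneously push the divergence exponent $\beta$ below $1$ and shrink the carry-over below $\lambda'-\lambda$ --- so the real work is checking that such a $\gamma$ exists and keeping the families $A_k$, $B_k$ honestly uniformly $\sz{1}$ so Solovay's Lemma is legitimately applicable; the probability inputs \cref{prlem} and \cref{thm:deviations} are then used essentially off the shelf.
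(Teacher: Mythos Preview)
Your proposal is correct and follows essentially the same route as the paper: geometric blocking $n_k\approx\gamma^k$, the maximal inequality \cref{prlem} plus \cref{thm:deviations} feeding into \cref{lem:solo}(1) for the upper bound, and independent block increments $Y_k=S_{n_k}-S_{n_{k-1}}$ feeding into \cref{lem:solo}(2) for the lower bound, with the already-proved upper bound absorbing the carry-over $S_{n_{k-1}}-\tfrac{n_{k-1}}{2}$. The only differences are cosmetic---your summability exponent is $\lambda^2/\gamma$ where the paper first simplifies to exponent $\lambda$ via $\gamma<\lambda$, and you are (rightly) more explicit that the two-sided control of $S_{n_{k-1}}$ uses part~(1) applied to $\neg X$ as well as to $X$, a point the paper glosses over when it writes the absolute value in its analogue of your carry-over estimate.
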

\begin{note}
In the following proof we have to take care to only deal with computable numbers, but this is not a worry as computable numbers are dense and so we can choose an arbitrarily good computable approximation to a non-computable number if we are given one.
\end{note}
\begin{proof}[Proof of (1)]
Fix a computable $\lambda > 1$, let $\gamma$ be a computable number between 1 and $\lambda$ and let $n_r$ be the integer nearest to $\gamma^r$. Define $S_n$ as before and let $B_r$ be the set
$$
B_r = \left \{ X \in 2^\nn : S_n - \frac{n}{2} > \lambda \sqrt{\frac{n_r}{2}\log \log n_r} \mbox{ for some $n_r \leq n \leq n_{r+1}$}\right\}.
$$
$B_r$ is clearly a uniform sequence of $\sz{1}$ sets, and by Solovay's Lemma it suffices that $\sum_{r=0}^{\infty} \mu(B_r)$ converges in order to prove the claim.
We have that by \cref{prlem}:
\begin{align*}
\mu(B_r) &\leq \frac{1}{c} \mu\left (\left \{X \in 2^\nn : S_{n_{r+1}} - \frac{n_{r+1}}{2} > \lambda \sqrt{\frac{n_r}{2} \log \log n_r}\right\}\right) \\
&=  \frac{1}{c} \mu\left (\left \{ X \in 2^\nn :\frac{ S_{n_{r+1}} - \frac{n_{r+1}}{2}}{\sqrt{\frac{n_{r+1}}{4}}} > \lambda \sqrt{2 \frac{n_r}{n_{r+1}} \log \log n_r} \right \} \right)\\
&= \frac{1}{c} \mu\left (\left \{ X \in 2^\nn : S_{n_{r+1}}^* > \lambda \sqrt{2 \frac{n_r}{n_{r+1}} \log \log n_r} \right\}\right)
\end{align*}
Because $\frac{n_{r+1}}{n_r} \sim \gamma < \lambda$, we can choose $r$ large enough so that
$$
\mu(B_r) \leq \frac{1}{c} \mu \left (\left \{ X \in 2^\nn : S_{n_{r+1}}^* > \sqrt{2 \lambda \log \log n_r} \right \}\right).
$$
And so, by \cref{thm:deviations} we can choose $r$ large enough so that
$$
\mu(B_r) \leq \frac{1}{c} \exp(-\lambda \log \log n_r) = \frac{1}{c(\log n_r)^\lambda}
$$
We have that $\frac{1}{c(\log n_r)^\lambda}$ is arbitrarily close to $\frac{1}{c(r \log \gamma)^\lambda}$ and so for any given $k$ we can choose an $s$ such that $\sum_{r=s}^\infty \mu(B_r) < 2^{-k}$ and so $\sum_{r=0}^\infty \mu(B_r)$ not only converges, but is a computable number. The result follows, then, from the first part of Solovay's Lemma.
\end{proof}

\begin{proof}[Proof of (2)]
As before, let $\lambda < 1$ be computable and choose a computable $\eta > \lambda$ such that
\begin{equation}\label{eq:etadef}
1 -\eta < \left(\frac{\eta - \lambda}{2}\right)^2
\end{equation}
and choose $\gamma \in \nn$ such that $\frac{\gamma - 1}{\gamma} > \eta > \lambda.$ Set $n_r = \gamma^r$ and let
\begin{equation}\label{eq:dr}
D_r = S_{n_r} - S_{n_{r-1}}
\end{equation}
We have that $D_r$ represents the value of $S_n$, but only considered in blocks, so $\frac{D_r}{n_r - n_{r-1}}$ represents the average value of $S_n$ in the block of trials between $n_{r-1}$ and $n_r$. It is clear that the following sets, $A_r$, will correspond to independent events (from a probabilistic standpoint), as what occurs in $D_r$ is not affected by $D_k$ for $k\neq r$.
\begin{equation}\label{eq:ardef}
A_r = \{ X \in 2^\nn : D_r - \frac{n_r - n_{r-1}}{2} > \eta \sqrt{\frac{n_r}{2}\log \log n_r }\} .
\end{equation}
We have that
$$
\mu(A_r) = \mu\left( \left\{X \in 2^\nn : \frac{D_r - (n_r - n_{r-1}) \frac{1}{2}}{\sqrt{(n_r - n_{r-1})\frac{1}{4}}} > \eta \sqrt{2 \frac{n_r}{n_r - n_{r-1}} \log \log n_r} \right \}\right).
$$
Note that $n_r/(n_r - n_{r-1}) = \gamma / (\gamma - 1) < \eta^{-1}$, so
\begin{align*}
\mu(A_r) &\geq \mu\left(\left\{ X \in 2^\nn : \frac{D_r - (n_r - n_{r-1}) \frac{1}{2}}{\sqrt{(n_r - n_{r-1}) \frac{1}{4}}} > \sqrt{2 \eta \log \log n_r} \right\}\right) \\
&= \mu\left(\left\{ X \in 2^\nn : D_r^* > \sqrt{2 \eta \log \log n_r} \right\}\right)
\end{align*}
Again, by \cref{thm:deviations} we can choose $r$ great enough so that
$$
\mu(A_r) > \frac{1}{\log \log n_r} \exp(-\eta \log \log n_r) = \frac{1}{(\log \log n_r)(\log n_r)^\eta}.
$$
Because $n_r = \gamma^r$ and $\eta < 1$, we can choose $r$ large enough so that $\mu(A_r) > \frac{1}{r}$ and so the sum will diverge. From the second part of Solovay's Lemma, then, we have that the theorem holds when we consider $D_n$ instead of $S_n$.

It then remains to show that the $S_{n_{r-1}}$ term in \eqref{eq:dr} can be neglected, so that we can get the right form of the law.

From the first part of the theorem, (choosing $\lambda = 2$) we have that there is an $N$ such that for $r > N$ and all random $X$
\begin{equation}\label{eq:fix}
\left|S_{n_{r-1}} - \frac{n_{r-1}}{2 }\right| < 2\sqrt{\frac{n_{r-1}}{2} \log \log n_{r-1}}.
\end{equation}
Choose $\eta$ as in \eqref{eq:etadef}, and then we will have
$$
n_{r-1} = \frac{n_r}{\gamma} < n_r(\frac{\eta - \lambda}{2})^2
$$
and so by \eqref{eq:fix}
\begin{equation}\label{eq:grfix}
S_{n_{r-1}} - \frac{n_{r-1}}{2} > -(\eta - \lambda)\sqrt{\frac{n_{r}}{2}\log \log n_r}
\end{equation}
Adding \eqref{eq:grfix} to the condition on the set in \eqref{eq:ardef} (which we know holds infinitely often for all random $X$) we obtain
\begin{align*}
S_{n_r} - S_{n_{r-1}} - \frac{n_r - n_{r-1}}{2} + S_{n_{r-1}} - \frac{n_{r-1}}{2} >& (\eta -(\eta - \lambda) )\sqrt{\frac{n_r}{2}\log \log n_r } \\
S_{n_r} - \frac{n_r}{2} >& \lambda \sqrt{\frac{n_r}{2}\log \log n_r }
\end{align*}
which is \eqref{eq:lil2} with $n = n_r$. It follows that for all random $X$ the inequality holds for infinitely many $r$ and so the second part of the theorem is established.
\end{proof}

\section{Concluding Remarks}
\label{sec:conclusion}
As the above results show, there are several powerful advantages of the Martin-L\"of definition of randomness.

Firstly, we have Solovay's Lemma which enables us to prove a great many results from probability theory that generally hold for i.i.d. random variables. The fact that a proof from probability theory can be copied nearly verbatim (as in the proof of the Law of the Iterated Logarithm above) and applied to Martin-L\"of randomness is a convincing argument for its usefulness. Because of this correspondence, it seems likely that other results from probability theory such as the Central Limit Theorem could be adapted to a computability-theoretic context and proven.

Secondly, we have that, by a theorem of Schnorr (see \cite{simpson2007almost}), the Martin-L\"of definition is precisely equivalent to requiring that random elements be ``incompressible" in terms of their informational content. This is not only an intuitively satisfying property but a useful one, giving us more tools to work with random elements. While it was not discussed here, it is good to note that there are other such equivalencies involving randomness in the context of LR/LK reducibility.

Finally, we see that Martin-L\"of randomness is (at least within the hierarchy of arithmetical randomness we presented) the weakest randomness that ``gets the job done'' of satisfying our intuitive requirements of randomness. We would, of course, like to use the weakest definition necessary to do this. As we have seen, weakly random elements do not have the nice properties that random elements do, while strongly random elements \emph{will}. This shows us that weak randomness is insufficient, while strong randomness is, in a sense, overkill.

It must be admitted that there \emph{are} some properties of Martin-L\"of random numbers that do not coincide with the intuitive expectations of randomness. For example, we can exhibit random sequences that are low (their Turing jumps are equivalent to the Halting Problem) as well as random sequences that are hyperimmune-free (all sequences Turing reducible to the original sequence are bounded recursively). Despite this, though, the other properties of Martin-L\"of random numbers provide a strong argument for this definition of randomness being a good one.

\bibliography{bib}
\bibliographystyle{jloganal}

\end{document}